\documentclass[12pt, reqno]{amsart}

\usepackage{amsmath, amsthm, amssymb, enumerate, amscd}

\theoremstyle{plain}
\newtheorem{theorem}{Theorem}[section]

\newtheorem{corollary}[theorem]{Corollary}

\theoremstyle{definition}

\newtheorem{example}[theorem]{Example}

\newcommand{\Z}{{\mathbb{Z}}}

\begin{document}

\title[Cyclic LCD code over $\Z_4$] { The condition for a cyclic code over $\Z_4$ of odd length to have a complementary dual}

\author{Seth Gannon}
\address{Department of Mathematics\\ 
University of Louisville\\
Louisville, KY 40292, USA}
\email{dalton.gannon@louisville.edu}
\thanks{$\dag$ the corresponding author}
\author{Hamid Kulosman$^\dag$}
\address{Department of Mathematics\\ 
University of Louisville\\
Louisville, KY 40292, USA}
\email{hamid.kulosman@louisville.edu}

\subjclass[2010]{Primary 11T71, 94B15; Se\-con\-dary 11T06}

\keywords{Complementary dual codes; LCD codes; Cyclic codes; Hulls; Reciprocal polynomials}

\date{}

\begin{abstract} 
We show that a necessary and sufficient condition for a cyclic code $C$ over $\Z_4$ of odd length to be an LCD code is that $C=(f(x))$ where $f$ is a self-reciprocal polynomial in $\Z_4[X]$.
\end{abstract}

\maketitle

\section{Introduction}

A {\it linear code with complementary dual} (or an {\it LCD code} for short) is a linear code $C$ whose dual satisfies $C\cap C^\perp=\{\bf 0\}$. It was defined in \cite{m}, where a necessary and sufficient for a linear code over a field to be an LCD code was given in terms of the generator matrix. Later in \cite{ym} the authors gave a necessary and sufficient condition for a cyclic code over a field to be an LCD code. The paper \cite{ym} is the main inspiration for this paper. We wanted to give a necessary and sufficient condition for a cyclic code, but now over $\Z_4$, to be an LCD code. For this purpose we used a theorem from the recent paper \cite{jsu} in which a formula for the number of elements in $\mathrm{Hull}(C)=C\cap C^\perp$ was given in terms of the generators of a cyclic code $C$ of odd length $N$ over $\Z_4$. The LCD codes (and, more generally, the hulls of linear codes) are recently being of considerable interest since there are several applications of them, including, for example, the recently found applications in Quantum Coding Theory. As an example of recent papres about LCD codes we mention \cite{ll} where some characterizations of different type are given.

\smallskip
We will first give some definitions and notation. The reader can consult \cite{hp} and \cite{jsu} for all undefined notions and for more detailed explanations how the various notions are used. We denote by $\Z_n=\{0,1,\dots, n-1\}$ the ring of residues modulo $n$. The group of invertible elements of this ring is denoted by $\Z_n^\ast$ and the order of an element $k\in\Z_n^\ast$ is denoted by $\mathrm{ord}_{\Z_n^\ast}(k)$. For the sake of notational convenience we will later write $\mathrm{ord}_{\Z_1^\ast}(2)=1$. We denote by $\varphi(n)$ the {\it Euler function}. We will also use the following two functions: $\displaystyle{\gamma(n)=\frac{\varphi(n)}{\mathrm{ord}_{\Z_n^\ast}(2)}}$ and $\displaystyle{\beta(n)=\frac{\varphi(n)}{2\,\mathrm{ord}_{\Z_n^\ast}(2)}}$. If $R$ is a commutative ring, the cyclic codes over $R$ of length $N$ are the ideals of the quotient ring $\displaystyle{\frac{R[X]}{(X^N-1)}}$. We denote the elements of $R[X]$ by $f(X)$, or shortly by $f$, while the elements of $\displaystyle{\frac{R[X]}{(X^N-1)}}$ are denoted by $f(x)$ (so that $x=X+(X^N-1)$ and $f(x)=f+(X^N-1)$).

Let $f(X)=a_0+a_1X+\dots+a_{n-1}X^{n-1}+X^n$ be a monic polynomial in $\Z_4[X]$ whose constant term $a_0$ is a unit in $\Z_4$. The {\it reciprocal polynomial} $f^\ast$ of $f$ is defined by
\[f^\ast(X)=a_0^{-1}X^{\deg(f)}f(\frac{1}{X}).\]
Clearly $(f^\ast)^\ast=f$ and $(fg)^\ast=f^\ast g^\ast$ if $g$ is another monic polynomial in $\Z_4[X]$ with unit constant term. A monic polynomial $f\in\Z_4[X]$ with unit constant term  is said to be {\it self-reciprocal} if $f=f^\ast$. Otherwise the pair $(f,f^\ast)$ is called a {\it reciprocal pair}.

Let $n$ be a positive integer. We say that the pair $(n,2)$ is {\it good} if $n\mid (2^k+1)$ for some integer $k\ge 1$. Otherwise we say that the pair $(n,2)$ is {\it bad}.

\medskip
Let $N$ be an {\it odd} positive integer. By \cite[page 4]{jsu}, the polynomial $X^N-1\in\Z_4[X]$ can be decomposed in $\Z_4[X]$ into a product of monic irreducible factors in the following way:
\begin{equation}\label{X^N-1}
X^N-1=\prod_{\substack{
                         n\mid N\\
                         (n,2)\,\mathrm{good}}}(\,\prod_{i=1}^{\gamma(n)} g_{in}\,)\,\prod_{\substack{
                                                                                                                                            n\mid N\\
                                                                                                                                           (n,2)\,\mathrm{bad}}}(\,\prod_{i=1}^{\beta(n)}f_{in}\,f_{in}^\ast\,),
\end{equation}
where the polynomials $g_{in}$ are self-reciprocal and the pairs $(f_{in}, f_{in}^\ast)$ are reciprocal pairs. This decomposition is unique up to the order of factors, and the polynomials that appear on the right-hand side of (\ref{X^N-1}) are pairwise relatively prime and basic irreducible. Moreover, any monic factor $g$ of $X^N-1$ factors uniquely (up to the order of factors) into a product of monic irreducible polynomials in $\Z_4[X]$ and those monic irreducibles are from the set 
\[ \mathrm{Fact}(X^N-1)=\{g_{in}, f_{in}, f_{in}^\ast\;|\;n,i\}.\]
We will denote by $\mathrm{Fact}(g)$ the set of monic irreducible factors of $g$ (which are $\ne 1$) that appear in that decomposition. Thus $\mathrm{Fact}(g)\subseteq \mathrm{Fact}(X^N-1)$.

\medskip
We will use the next two theorems.

\begin{theorem}[{\cite[Theorem 6]{cs}}]\label{CS_thm}
For every cyclic code $C$ over $\Z_4$ of odd length $N$ there are unique monic polynomials $f(X), g(X), h(X)$ in $\Z_4[X]$ such that $X^N-1=f(X)g(X)h(X)$ and $C=(f(x)g(x), 2f(x))$. 
\end{theorem}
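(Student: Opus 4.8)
The plan is to reduce everything to the Chinese Remainder Theorem applied to the factorization (\ref{X^N-1}). Since $N$ is odd, the monic basic irreducible factors on the right-hand side of (\ref{X^N-1}) are pairwise relatively prime, so if I list them as $p_1,\dots,p_r$ (relabeling the $g_{in}$, $f_{in}$, $f_{in}^\ast$), the CRT yields a ring isomorphism
\[
\frac{\Z_4[X]}{(X^N-1)}\;\cong\;\prod_{j=1}^r \frac{\Z_4[X]}{(p_j)}.
\]
Each factor $\Z_4[X]/(p_j)$ is a Galois ring: generated over $\Z_4$ by a basic irreducible polynomial, it is a finite local ring whose maximal ideal is $(2)$ and whose residue field is $\F_{2^{\deg p_j}}$. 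Because $2^2=0$ in $\Z_4$, this maximal ideal squares to zero, so $\Z_4[X]/(p_j)$ is a chain ring with exactly the three ideals $0\subsetneq (2)\subsetneq \Z_4[X]/(p_j)$. I would record this local structure as the first step.

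Under the isomorphism above, a cyclic code $C$ (an ideal of the left-hand ring) corresponds to a tuple $(I_1,\dots,I_r)$ of ideals, one in each factor, and by the previous step each $I_j$ is $0$, $(2)$, or the whole factor. This is the crux: $C$ is completely encoded by a partition of $\{1,\dots,r\}$ into three blocks
\[
D=\{\,j: I_j=0\,\},\quad B=\{\,j: I_j=(2)\,\},\quad A=\{\,j: I_j=\text{everything}\,\}.
\]
I would then set $f=\prod_{j\in D}p_j$, $\ g=\prod_{j\in B}p_j$, $\ h=\prod_{j\in A}p_j$, so that $X^N-1=fgh$ automatically.

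It remains to verify $C=(fg,2f)$, which I would do component by component. The key observation is that, by coprimality, the image of $p_i$ in $\Z_4[X]/(p_j)$ is a \emph{unit} whenever $i\ne j$, since it lies outside the maximal ideal $(2)$. Hence $fg=\prod_{j\in D\cup B}p_j$ maps to $0$ in the factors indexed by $D\cup B$ and to a unit in the factors indexed by $A$, so the ideal $(fg)$ is the tuple with $0$ on $D\cup B$ and the whole ring on $A$. Likewise $2f$ maps to $0$ on $D$ and to $2\cdot(\text{unit})$, hence generates $(2)$, on $B\cup A$. Adding the two tuples componentwise gives $0$ on $D$, $(2)$ on $B$, and the whole ring on $A$ — precisely the tuple of $C$ — so $(fg,2f)=C$. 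Uniqueness follows because the blocks $D,B,A$ are determined by $C$ through the (unique) CRT decomposition, and the factorization (\ref{X^N-1}) into monic basic irreducibles is itself unique up to order; hence $f,g,h$ are forced.

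The part demanding the most care is the structural claim that each $\Z_4[X]/(p_j)$ is a chain ring with only three ideals, since the entire argument hinges on there being exactly three possibilities for each local component $I_j$; establishing this cleanly (local ring with principal nilpotent maximal ideal of nilpotency index $2$) is the main step, after which the identification $C=(fg,2f)$ reduces to routine componentwise bookkeeping through the CRT.
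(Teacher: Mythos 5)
The paper offers no proof of this statement at all---it is imported verbatim as \cite[Theorem 6]{cs}---so the only basis for comparison is the cited source, whose argument runs along exactly the lines you chose. Your proof is correct and is essentially that standard argument: CRT over the pairwise coprime basic irreducible factors of $X^N-1$ (this is where oddness of $N$ enters), the fact that each local factor $\Z_4[X]/(p_j)$ is a Galois ring whose only ideals are $0$, $(2)$, and the whole ring, and componentwise bookkeeping that yields both the normal form $(f(x)g(x),2f(x))$ and, since any monic triple with $fgh=X^N-1$ must come from a partition of the basic irreducible factors, the uniqueness of $f,g,h$.
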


\begin{theorem}[{\cite[Theorem 3.2]{jsu}}]\label{JSU_thm}
Let $C=(f(x)g(x), 2f(x))$ be a cyclic code over $\Z_4$ of odd length $N$, where $f(X), g(X), h(X)$ are monic divisors of $X^N-1$ in $\Z_4[X]$ such that $X^N-1=f(X)g(X)h(X)$. Then
\begin{equation}\label{Card_Hull}
|\mathrm{Hull}(C)|=4^{\deg(H(X))}2^{\deg(G(X))},
\end{equation}
where $G$ and $H$ are monic polynomials from $\Z_4[X]$ defined by
\begin{align*}
H(X) &= \gcd(h(X), f^\ast(X)),\\
G(X) &= \frac{X^N-1}{\gcd(h(X), f^\ast(X))\cdot \mathrm{lcm}(f(X), h^\ast(X))}.
\end{align*}
\end{theorem}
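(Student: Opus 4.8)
The plan is to exploit the semisimple structure of $R=\Z_4[X]/(X^N-1)$ that the oddness of $N$ forces. Since $N$ is odd, $X^N-1$ is separable modulo $2$, so the basic irreducible factors appearing in (\ref{X^N-1}) are pairwise coprime, and the Chinese Remainder Theorem gives a ring isomorphism $R\cong\prod_{p}R_p$, where $p$ runs over the monic basic irreducible factors of $X^N-1$ and $R_p=\Z_4[X]/(p)$ is the Galois ring $\mathrm{GR}(4,\deg p)$. Each $R_p$ is a chain ring whose only ideals form the chain $0\subset(2)\subset R_p$, so I can attach to every ideal of $R_p$ a \emph{level} in $\{0,1,2\}$ recording which of these three it is. A cyclic code is then just a choice of one level at each $p$, and its component there contributes $4^{\deg p}$, $2^{\deg p}$, or $1$ to the cardinality according to the level.

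First I would read off the local levels of $C=(f(x)g(x),\,2f(x))$. Reducing the two generators modulo each $p$ and using that $f,g,h$ are pairwise coprime products of distinct irreducibles, one finds that $C_p$ is all of $R_p$ when $p\mid h$ (level $2$), equals $(2)$ when $p\mid g$ (level $1$), and is $0$ when $p\mid f$ (level $0$); summing $\deg p$ over each case recovers $|C|=4^{\deg h}2^{\deg g}$ as a consistency check.

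The heart of the argument is the dual. I would use the standard description of the Euclidean dual of a cyclic code as $C^\perp=\mu(\mathrm{Ann}_R(C))$, where $\mathrm{Ann}_R(C)$ is the annihilator ideal and $\mu$ is the ring automorphism $x\mapsto x^{-1}$ of $R$; the identity follows from the fact that $\langle a,\sigma^k b\rangle$ for all shifts $\sigma^k$ records the coefficients of $a(x)b(x^{-1})$, so $b\in C^\perp$ iff $b(x^{-1})\in\mathrm{Ann}_R(C)$. In each chain-ring component the annihilator simply reverses the level ($0\leftrightarrow 2$, and $1\leftrightarrow 1$ because multiplication by $2$ in $\mathrm{GR}(4,d)$ has kernel exactly $(2)$), while $\mu$, being induced by the reciprocation $X\mapsto X^{-1}$, carries the component at $p$ isomorphically onto the component at $p^\ast$ and hence preserves levels. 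Composing the two, the local level of $C^\perp$ at $q$ equals the reversed level of $C$ at $q^\ast$; translating through $q^\ast\mid h\Leftrightarrow q\mid h^\ast$, etc., this says $C^\perp$ has level $2$ at $q\mid f^\ast$, level $1$ at $q\mid g^\ast$, and level $0$ at $q\mid h^\ast$ (equivalently $C^\perp=(h^\ast(x)g^\ast(x),\,2h^\ast(x))$). This duality step — relating the Euclidean inner product to annihilator-plus-reciprocation and checking its compatibility with the CRT splitting — is where I expect the real work to lie.

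Finally I would intersect. Because each $R_p$ is a chain ring, $(\mathrm{Hull}(C))_p=C_p\cap C^\perp_p$ has level equal to the minimum of the two levels, so $|\mathrm{Hull}(C)|=4^{\sum_{\min=2}\deg p}\,2^{\sum_{\min=1}\deg p}$. A short case check shows $\min=2$ exactly when $p\mid h$ and $p\mid f^\ast$, i.e. $p\mid\gcd(h,f^\ast)=H$, so the $4$-exponent is $\deg H$. Likewise $\min\ge 1$ means $p\nmid f$ and $p\nmid h^\ast$, i.e. $p\nmid\mathrm{lcm}(f,h^\ast)$, and removing the $\min=2$ locus leaves $\min=1$ exactly when $p$ divides $(X^N-1)/(H\cdot\mathrm{lcm}(f,h^\ast))=G$; one verifies en route that $H$ and $\mathrm{lcm}(f,h^\ast)$ have disjoint sets of irreducible factors, so that $G$ is a genuine polynomial. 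Hence the $2$-exponent is $\deg G$, giving $|\mathrm{Hull}(C)|=4^{\deg H}2^{\deg G}$, as claimed.
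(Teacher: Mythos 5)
The paper never proves this statement: it is imported verbatim as \cite[Theorem 3.2]{jsu} and used as a black box in the proof of Theorem \ref{main_thm}, so there is no in-paper proof to compare yours against. Judged on its own terms, your argument is correct and essentially complete. The decomposition of $\Z_4[X]/(X^N-1)$ into Galois rings $\mathrm{GR}(4,\deg p)$ is legitimate because $N$ odd makes $X^N-1$ squarefree modulo $2$, so the basic irreducible factors are pairwise comaximal in $\Z_4[X]$; each component is a chain ring with ideal chain $0\subset(2)\subset R_p$, so ideals are determined by your ``levels''. Your reading of the levels of $C=(f(x)g(x),2f(x))$ is right (for $p\mid g$ the image of $f$ is a unit in $R_p$, so the component is exactly $(2)$); the identity $C^\perp=\mu(\mathrm{Ann}_R(C))$ is the standard polynomial reformulation of the Euclidean dual; and the fact that $\mu$ carries the component at $p$ to the component at $p^\ast$ follows, as you indicate, from $p(x^{-1})$ being a unit multiple of $p^\ast(x)$ in $R$. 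The final bookkeeping is also sound, and you correctly isolate the one point that needs checking, namely that $\gcd(h,f^\ast)$ and $\mathrm{lcm}(f,h^\ast)$ share no irreducible factor (via $p\mid h^\ast\Leftrightarrow p^\ast\mid h$ together with $\gcd(f,h)=1$), which is what makes $G$ a genuine polynomial and the exponent of $2$ equal to $\deg G$. Two small things worth making explicit: that $h^\ast$ is again a monic divisor of $X^N-1$ with unit constant term (because $X^N-1$ is self-reciprocal), and that comaximality modulo $2$ lifts to comaximality over $\Z_4$ (if $1=a+2b$ with $a$ in the ideal, then $a=1-2b$ is a unit). Compared with the route in \cite{jsu}, which works with explicit generator polynomials, imports the dual formula $C^\perp=(g^\ast(x)h^\ast(x),2h^\ast(x))$ from the literature, and computes the hull by gcd/lcm manipulations, your CRT-local argument is more structural: it re-derives the dual formula rather than assuming it, and it makes the shape $4^{\deg H}2^{\deg G}$ of the answer transparent, the $4$-part and the $2$-part being exactly the loci where the minimum local level is $2$, respectively $1$.
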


\bigskip
\section{Results}

The next theorem is our necessary and sufficient condition for a cyclic code over $\Z_4$ of odd length to have a complementary dual.

\begin{theorem}\label{main_thm}
A cyclic code $C$ over $\Z_4$ of odd length $N$ is an LCD code if and only if $C=(f(x))$, where $f(X)$ is a self-reciprocal monic divisor of $X^N-1$ in $\Z_4[X]$.
\end{theorem}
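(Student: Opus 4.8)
The plan is to read off the condition $|\mathrm{Hull}(C)|=1$ from Theorem \ref{JSU_thm} and then translate it into a purely combinatorial statement about factor sets. First I would apply Theorem \ref{CS_thm} to fix monic polynomials $f,g,h$ with $X^N-1=f(X)g(X)h(X)$ and $C=(f(x)g(x),2f(x))$. By Theorem \ref{JSU_thm}, $|\mathrm{Hull}(C)|=4^{\deg H}2^{\deg G}$ with $H=\gcd(h,f^\ast)$ and $G=(X^N-1)/(\gcd(h,f^\ast)\cdot\mathrm{lcm}(f,h^\ast))$. Since $C$ is an LCD code exactly when $\mathrm{Hull}(C)=\{\mathbf 0\}$, i.e.\ $|\mathrm{Hull}(C)|=1$, and since $\deg H,\deg G\ge 0$ while $4,2>1$, this happens if and only if $\deg H=\deg G=0$; as $H$ and $G$ are monic, this is equivalent to $H=1$ and $G=1$.

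The key tool is a dictionary between monic divisors of $X^N-1$ and subsets of $S=\mathrm{Fact}(X^N-1)$. Because $N$ is odd, the decomposition (\ref{X^N-1}) exhibits $X^N-1$ as a product of \emph{distinct} basic irreducibles, so every monic divisor $d\mid X^N-1$ is determined by its factor set $\mathrm{Fact}(d)\subseteq S$, with $\gcd$ and $\mathrm{lcm}$ corresponding to intersection and union. Moreover reciprocation $p\mapsto p^\ast$ permutes $S$: by (\ref{X^N-1}) it fixes each self-reciprocal $g_{in}$ and swaps the members of each reciprocal pair $(f_{in},f_{in}^\ast)$, so $\ast$ is an involution of $S$ with $S^\ast=S$, and $\mathrm{Fact}(d^\ast)=\mathrm{Fact}(d)^\ast$ for every divisor $d$. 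Writing $U=\mathrm{Fact}(f)$, $V=\mathrm{Fact}(g)$, $W=\mathrm{Fact}(h)$, these three sets partition $S$, and $f$ is self-reciprocal iff $U=U^\ast$.

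With this dictionary the two conditions become combinatorial. The condition $H=1$ reads $W\cap U^\ast=\emptyset$, equivalently $U\cap W^\ast=\emptyset$. Granting $H=1$, the condition $G=1$ reads $\mathrm{lcm}(f,h^\ast)=X^N-1$, i.e.\ $U\cup W^\ast=S$. Thus $C$ is LCD iff $U$ and $W^\ast$ partition $S$. Comparing cardinalities with the partition $S=U\sqcup V\sqcup W$ and using $|W^\ast|=|W|$ forces $|V|=0$, hence $g=1$; then $S=U\sqcup W$ together with $S=U\sqcup W^\ast$ gives $W=W^\ast$, and since $\ast$ permutes $S$ we get $U^\ast=S\setminus W^\ast=S\setminus W=U$, i.e.\ $f$ is self-reciprocal. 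Conversely, if $g=1$ and $f=f^\ast$ then $W=S\setminus U$ is $\ast$-invariant, so both $U\cap W^\ast=\emptyset$ and $U\cup W^\ast=S$ hold, giving $H=G=1$ and $|\mathrm{Hull}(C)|=1$. Finally, when $g=1$ we have $C=(f(x)\cdot 1,2f(x))=(f(x))$ with $f$ a self-reciprocal monic divisor of $X^N-1$; conversely any $C=(f(x))$ with $f\mid X^N-1$ self-reciprocal has this exact shape as its (unique) decomposition from Theorem \ref{CS_thm} with $g=1$, so the two formulations match.

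The main obstacle I expect is justifying the factor-set dictionary rigorously, not the short counting argument. Specifically, over $\Z_4$ (neither a field nor a domain) I must be sure that the decomposition (\ref{X^N-1}) really makes monic divisors correspond bijectively to subsets of $S$, that $\gcd$ and $\mathrm{lcm}$ act as intersection and union in this setting, and that $\mathrm{Fact}(d^\ast)=\mathrm{Fact}(d)^\ast$; all of this rests on the uniqueness of factorization into pairwise coprime basic irreducibles stated after (\ref{X^N-1}). A secondary point is the passage between equality of monic polynomials and equality of their factor sets, which is exactly the squarefreeness guaranteed there.
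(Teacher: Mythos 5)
Your proposal is correct and follows essentially the same route as the paper: reduce the LCD condition via Theorem \ref{JSU_thm} to $H=G=1$, translate everything into statements about $\mathrm{Fact}$-sets using the squarefree decomposition (\ref{X^N-1}), and then use the cardinality comparison $|\mathrm{Fact}(h^\ast)|=|\mathrm{Fact}(h)|$ against the partition $\mathrm{Fact}(f)\sqcup\mathrm{Fact}(g)\sqcup\mathrm{Fact}(h)$ to force $g=1$ and self-reciprocity of $f$. Your explicit $U,V,W$ dictionary and the worry about justifying gcd/lcm as intersection/union are just a more careful spelling-out of what the paper does implicitly with its $\mathrm{Fact}(\cdot)$ notation.
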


\begin{proof}
Let $C$ be a cyclic code over $\Z_4$ of odd length $N$. Suppose that $C$ is an LCD code. It follows from Theorem \ref{CS_thm} and Theorem \ref{JSU_thm} that there are unique polynomials $f(X), g(X), h(X)$ in $\Z_4[X]$ such that $C=(f(x)g(x), 2f(x))$ with the following conditions satisfied:
\begin{gather}
f(X)g(X)h(X)=X^N-1, \label{c1}\\
f,g,h \text{ are pairwise relatively prime}, \label{c2}\\
\gcd(h(X), f^\ast(X))=1, \label{c3}\\
\mathrm{lcm}(f(X), h^\ast(X))=X^N-1. \label{c4}
\end{gather}
It follows from (\ref{c3}) that 
\begin{equation*}
\gcd(f(X), h^\ast(X))=1,
\end{equation*}
which, together with (\ref{c4}), implies the relations
\begin{gather}
\mathrm{Fact}(f)\cap \mathrm{Fact}(h^\ast)=\emptyset,\label{c6}\\
\mathrm{Fact}(f) \cup \mathrm{Fact}(h^\ast)= \mathrm{Fact}(X^N-1).\label{c7}
\end{gather}
The conditions (\ref{c1}) and (\ref{c2}) can be reformulated as
\begin{gather}
\mathrm{Fact}(f)\cup \mathrm{Fact}(g)\cup \mathrm{Fact}(h)=\mathrm{Fact}(X^N-1),\label{c8}\\
\mathrm{Fact}(f), \mathrm{Fact}(g), \mathrm{Fact}(h) \text{ are pairwise disjoint}.\label{c9}
\end{gather}
Now from (\ref{c6}), (\ref{c7}), (\ref{c8}), and (\ref{c9}) we can conclude that
\begin{equation}\label{c10}
\mathrm{Fact}(h^\ast)=\mathrm{Fact}(g)\cup \mathrm{Fact}(h).
\end{equation}
Since $\mathrm{Fact}(g)$ and $\mathrm{Fact}(h)$ are disjoint, and $\mathrm{Fact}(h^\ast)$ and $\mathrm{Fact}(h)$ have the same number of elements, we conclude that 
\begin{equation}\label{c11}
\mathrm{Fact}(g)=\emptyset,
\end{equation}
or, equivalently, that 
\begin{equation}\label{c12}
g=1.
\end{equation}
Then (\ref{c10}) and (\ref{c11}) imply that $h$ is self-reciprocal, and, since, due to (\ref{c12}), $X^N-1=f(X)h(X)$, that $f$ too is self-reciprocal. Also, again using (\ref{c12}), we have $C=(f(x)g(x), 2f(x))=(f(x), 2f(x))=(f(x))$.

\smallskip
Conversely, let $C=(f(x))$, where $f(X)$ is a monic self-reciprocal divisor of $X^N-1$ in $\Z_4[X]$. Then $g(X)=1$ and $\displaystyle{h(X)=\frac{X^N-1}{f(X)}}$ are the unique monic divisors of $X^N-1$ such that $f(X)g(X)h(X)=X^N-1$ and $C=(f(x)g(x), 2f(x))$. Since $f(X)$ and $h(X)$ are relatively prime and self-reciprocal, then in Theorem \ref{JSU_thm} we have $H(X)=1$ and $G(X)=1$. Hence, by Theorem \ref{JSU_thm}, $|\mathrm{Hull}(C)|=1$, i.e., $C$ is an LCD code.
\end{proof}

\begin{example}
The monic irreducible factorization of $X^7-1$ in $\Z_4[X]$ is given by
\[X^7-1=(X-1)(X^3+2X^2+X-1)(X^3-X^2+2X-1).\]
The divisors of $N=7$ are $1$ and $7$, where $(1,2)$ is a good pair and $(7,2)$ is a bad pair. So the notation for the above factors of $X^7-1$, in accordance with \cite{jsu}, is: $g_{11}=X-1$, $f_{17}=X^3+2X^2+X-1$, $f_{17}^\ast=X^3-X^2+2X-1$. By our Theorem \ref{main_thm} we have the following list of all cyclic LCD codes of length $7$ over $\Z_4$:
\begin{align*}
C&=(1),\\
C&=(g_{11}),\\
C&=(f_{17}f_{17}^\ast),\\
C&=(0).
\end{align*}
\end{example}

\begin{corollary}
Let $N$ be an odd positive integer. The number of cyclic LCD codes of length $N$ over $\Z_4$ is $\displaystyle{2^{\mathrm{nsrf}}}$, where
\[
\mathrm{nsrf}=\varphi(n)\;(\sum_{n\mid N, \,(n,2)\,\mathrm{good}}\frac{1}{\mathrm{ord}_{\Z_n^\ast}(2)} +\frac{1}{2}\sum_{n\mid N, \,(n,2)\,\mathrm{bad}} \frac{1}{\mathrm{ord}_{\Z_n^\ast}(2)}\,).
\]
\end{corollary}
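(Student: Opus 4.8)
The plan is to use Theorem \ref{main_thm} to reduce the enumeration of cyclic LCD codes to the enumeration of self-reciprocal monic divisors of $X^N-1$ in $\Z_4[X]$, and then to count those divisors combinatorially from the factorization (\ref{X^N-1}).

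First I would argue that the assignment $f\mapsto (f(x))$ is a \emph{bijection} from the set of self-reciprocal monic divisors of $X^N-1$ onto the set of cyclic LCD codes of length $N$ over $\Z_4$. Theorem \ref{main_thm} already gives that this map is well defined and hits exactly the LCD codes. For injectivity I would invoke the uniqueness in Theorem \ref{CS_thm}: if $C=(f(x))$ with $f$ self-reciprocal, then $C$ is LCD, and the proof of Theorem \ref{main_thm} forces $g=1$, so the unique triple attached to $C$ is $\bigl(f,\,1,\,(X^N-1)/f\bigr)$; hence $f$ is recovered uniquely from $C$ and distinct self-reciprocal divisors yield distinct codes. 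This turns the count of LCD codes into the count of self-reciprocal monic divisors.

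Next I would count those divisors. Since $N$ is odd, $X^N-1$ is squarefree, so every monic divisor is the product over a unique subset $S\subseteq \mathrm{Fact}(X^N-1)$. Reciprocation $p\mapsto p^\ast$ permutes $\mathrm{Fact}(X^N-1)$, fixing each self-reciprocal factor $g_{in}$ and interchanging $f_{in}\leftrightarrow f_{in}^\ast$ within each reciprocal pair. A divisor $f=\prod_{p\in S}p$ is self-reciprocal exactly when $S$ is invariant under this permutation, i.e. $S$ contains $f_{in}$ if and only if it contains $f_{in}^\ast$. Such invariant subsets correspond to free independent binary choices: include-or-not for each $g_{in}$, and include-both-or-neither for each pair $\{f_{in},f_{in}^\ast\}$.

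Finally I would read off the exponent. By (\ref{X^N-1}) the number of factors $g_{in}$ is $\sum_{n\mid N,\,(n,2)\,\mathrm{good}}\gamma(n)$ and the number of reciprocal pairs is $\sum_{n\mid N,\,(n,2)\,\mathrm{bad}}\beta(n)$; substituting $\gamma(n)=\varphi(n)/\mathrm{ord}_{\Z_n^\ast}(2)$ and $\beta(n)=\varphi(n)/(2\,\mathrm{ord}_{\Z_n^\ast}(2))$ recovers exactly $\mathrm{nsrf}$, so the total number of self-reciprocal divisors, and hence of LCD codes, is $2^{\mathrm{nsrf}}$. I expect the only genuinely delicate point to be the injectivity in the first step --- verifying that different self-reciprocal divisors really give different codes, including the boundary cases $f=1$ (giving $C=(1)$) and $f=X^N-1$ (giving the zero code $C=(0)$); the remainder is a routine squarefree-divisor count.
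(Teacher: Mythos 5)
Your proposal is correct and follows essentially the same route as the paper, whose proof is a one-line appeal to Theorem \ref{main_thm} and the factorization (\ref{X^N-1}): reduce to counting self-reciprocal monic divisors, then count invariant subsets of $\mathrm{Fact}(X^N-1)$ under reciprocation. You simply supply the details the paper leaves implicit --- in particular the injectivity of $f\mapsto (f(x))$ via the uniqueness in Theorem \ref{CS_thm} --- which is a worthwhile check but not a different argument.
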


\begin{proof}
Follows from Theorem \ref{main_thm} and the formula (\ref{X^N-1}). The notation ``nsrf" stands for the ``number of self-reciprocal factors".
\end{proof}

\bigskip
\small


\begin{thebibliography}{99}
\bibitem{cs}
CALDERBANK, A.R., SLOANE, N.J.A.:
    \textit{Modular and $p$-adic Cyclic Codes},
    Designs, Codes, and Cryptography, {\bf 6}(1995), 21-35.
\bibitem{hp}
HUFFMAN, W.C., PLESS, V.:
     \textit{Fundamentals of Error-Correcting Codes},
     Cambridge University Press, 2003.
\bibitem{jsu}
JITMAN, S., SANGWISUT, E., UDOMKAVANICH, P.:
    \textit{Hulls of Cyclic Codes over $\Z_4$},
    arXiv:1806.07590v1 [cs.IT], 20 Jun 2018
\bibitem{ll}
LIU, X., LIU, H.:
     \textit{LCD codes over finite chain rings},
     Finite Fields Appl., \textbf{34}(2015), 1-19.
\bibitem{m}
MASSEY, J.L.:
    \textit{Linear codes with complementary duals},
    Discrete Math. {\bf 106/107}(1992), 337-342.
\bibitem{ym}
YANG, X., MASSEY, J.L.:
    \textit{The condition for a cyclic code to have a complementary dual},
    Discrete Math. {\bf 126}(1994), 391-393.
\end{thebibliography}
\end{document}